\newtheorem{theorem}{Theorem}
\newtheorem*{theorem*}{Theorem}
\newtheorem{proposition}{Proposition}
\newtheorem{lemma}{Lemma}
\newtheorem{corollary}{Corollary}
\theoremstyle{definition}
\newtheorem{definition}{Definition}
\theoremstyle{remark}
\begin{document}

\title[Left-invariant sub-Riemannian Engel structures]{Left-invariant 
sub-Riemannian Engel structures: abnormal 
geodesics and integrability}

\author{Ivan Beschastnyi}
\address{International School for Advanced Studies, via Bonomea 
 265, Trieste 34136, Italy}
\email{i.beschastnyi@gmail.com}

\author{Alexandr Medvedev}
\address{International School for Advanced Studies, via Bonomea 
 265, Trieste 34136, Italy}
\email{amedvedev@sissa.it}

\begin{abstract}
We provide the first known family of examples of integrable homogeneous sub-Riemannian structures admitting strictly abnormal geodesics. These examples were obtained through the analysis of the equivalence problem for sub-Riemannian Engel structures. We formulate a criterion of strict abnormality in terms of structure functions of a canonical frame on a sub-Riemannian Engel manifold as well as estimates on conjugate times.
\end{abstract}

\maketitle

\section{Introduction}
\label{sec:intro}

It is well known that in sub-Riemannian geometry every geodesic must be a projection of a curve in the cotangent bundle called an \emph{extremal} and that there are two mutually non-exclusive types of such curves: normal and abnormal extremals. By a \emph{geodesic} we mean a curve, whose short arcs are length minimizing. Until the relatively recent example by R.~Montgomery~\cite{montgomery_singular} it was believed that sub-Riemannian minimizers could be only normal. Projections of normal extremals are smooth, always locally minimizing and in many ways behave similarly to Riemannian geodesics. 

In 1994 Montgomery~\cite{montgomery_singular} provided the first example of an integrable sub-Rieman\-nian structure admitting strictly abnormal (i.e.~that are not projections of normal extremals) geodesics. Since then it became clear that abnormal extremals play a very important role in sub-Riemannian geometry~\cite{bonnard, 
hakavuori_nonminimal, agrachev_abnormal, sussmann}. For example Sussmann and Liu~\cite{SussmannLiu} showed that the projections of ``regular'' abnormal extremals for 2-distributions are locally minimizing. Their results suggest that abnormal geodesics are a typical phenomenon for 2-distributions. This fact is closely related to the rigidity phenomena for singular curves~\cite{BryantHsu}.


Compared to normal geodesics we know very little about abnormal geodesics. For example, it is not known what kind of singularities the sub-Riemannian wave-front and sphere can have in a neighborhood of an abnormal geodesic~\cite{agrachev_open}. Some results for non-strictly abnormal geodesics (i.e.~for those that are normal as well) were obtained in~\cite{agrachev_abnormal,sachkov,andryuha}. Most of them rely heavily on the fact that the considered models had an integrable Hamiltonian system for the normal extremals. In~\cite{bonnard} structures with both strictly and non-strictly abnormal geodesics were considered, but in the strict case the models did not have an integrable normal geodesic flow, and so only some limited results could be obtained.

The motivation for this paper was to find sub-Riemannian Engel structures that have strictly abnormal geodesics and an integrable Hamiltonian system for normal extremals. An Engel manifold is a 4-dimensional manifold together with a rank-2 distribution of growth (2,3,4). The global properties of Engel distributions have attracted a lot of interest lately (see~\cite{kaz1,kaz2} and references therein).
In \Cref{sec:1} using a canonical frame we show that sub-Riemannian Engel structures are locally defined by 6 structure functions $T_i, 1\le i \le 6$ (see \Cref{thm:1} for details). Then we provide a local classification of
left-invariant Engel structures on 4-dimensional Lie groups. 
This problem was previously considered in the works of 
Almeida~\cite{almeida1,almeida2}, but the classification there is 
incomplete.

Engel manifolds are foliated by abnormal geodesics~\cite{sussmann}. In \Cref{sec:2} we characterize Engel structures that admit strictly abnormal geodesics. That allows us to prove our main result: Theorem \ref{the:main}. We show that homogeneous sub-Riemannian Engel structures defined there are super-integrable and admit strictly abnormal geodesics. It is worth mentioning that Gol\'e and Karidi~\cite{karidi} have already shown that even Carnot groups could have strictly abnormal geodesics. However integrability was not addressed in their paper. Another closely related result is~\cite{montgomery_integrable} where the authors provided an example of a left-invariant sub-Riemannian structure on a Carnot group whose normal geodesic flow is not integrable.

In \Cref{sec:3} we discuss local optimality of an abnormal geodesic on a sub-Riemannian Engel manifold in its $C^0$-neighbourhood. The investigation is governed by the study of the Jacobi equations and corresponding conjugate points. Absence of conjugate points is sufficient for minimality if the abnormal geodesic is strict. In \Cref{thm:3} we show how the minimality of an 
abnormal geodesic is related to the behaviour of a function
\be
\Delta = T_6 + \frac12 \dot{T}_2 - \frac14 (T_2)^2
\ee  
along it. 

Function $\Delta$ is a curvature invariant similar to the curvature invariants of normal sub-Riemannian geodesics introduced in~\cite{zelenko1} by Zelenko and Li. Later this result was used by them in~\cite{zelenko2} and by Barilari and Rizzi in~\cite{comparison} to prove comparison theorems like \Cref{thm:3}.

In the left-invariant we compute case explicitly all conjugate times for abnormal geodesics. This times are equal to
\be
t_{conj} = \frac{\pi k}{\sqrt{\Delta}}, \qquad \forall k \in \Z_+.
\ee
This also gives a family of explicit examples of sub-Riemannian structures with abnormal geodesics that lose optimality at finite moments of time. Such examples were previously known (see~\cite{free_carnot}), but were mostly limited to non-strictly abnormal geodesics.

\textbf{Acknowledgements.}
The authors would like to thank prof.~Andrei Agrachev and prof.~Yuri 
Sachkov for many useful discussions and suggestions. We also would like to thank the anonymous referees for the helpful remarks and valuable suggestions that helped to greatly improve the final text.

\section{Equivalence of Engel sub-Riemannian structures}\label{sec:1}
A four-di\-men\-sional manifold $M$ with a two-dimensional distribution 
$\cD$ is called an Engel manifold, if $\cD$ satisfies the following 
non-integrability conditions
\begin{align*}
&\rank ([\cD,\cD] )= 3,  \\
& \rank([\cD,[\cD,\cD]])  = 4, 
\end{align*}
where $[\cD, \cD]$ consists of those tangent vectors that can be 
obtained by taking commutators of local sections of $\cD$. 
Any two Engel structures are locally equivalent~\cite{engel,cartan}. A positive-definite metric $g$ on $\cD$ turns $M$ into a metric 
space with the distance
$$
d(q_0,q_1) = \inf_{\gamma} \left\{\int_0^1 \sqrt{g(\dot\gamma,\dot\gamma)}dt\right\},
$$
where the infimum is taken over all absolutely continuous curves $\gamma(t)$ such 
that $\gamma(0) = q_0$, $\gamma(1) = q_1$ and $\dot\gamma(t) \in 
\cD_{\gamma(t)} $ for almost every $t$. Locally minimizing curves are 
called \emph{geodesics}. 

It is well known that a sub-Riemannian Engel structure can be endowed with a canonical global frame. We shall now recall the construction for the reader's convenience.

Let $\mathcal{E}$ and $\mathcal{V}$ be distributions on an arbitrary manifold. We denote by $[\mathcal{E},\mathcal{V}]$ a distribution which 
is generated by brackets of germs of sections of $\mathcal{E}$ and 
$\mathcal{V}$. For an arbitrary distribution $\cD$ we use the notation 
$\cD^1=\cD$ and $\cD^i=[\cD,\cD^{i-1}].$
\begin{definition}
The \emph{Levi form} $\cL$ of a distribution $\cE$ in the point $p\in M$ is the  bi-linear and skew-symmetric map:
\[ \cL_p: \cE_p\times\cE_p \to T_p M/\cE_p, \]
defined by
\[  \cL_p(v,w)=[X_v,X_w]_p \mod{\cE} \]
where $X_v$ and $X_w$ are smooth vector fields defined in the neighborhood of $p$ which belong to $\cE$ and satisfy conditions $(X_v)_p=v$ and $(X_w)_p=w$.
\end{definition} 
It is straightforward  to check that the definition of the Levi form 
does not depend on the choice of $X_v$ and $X_w$.

\begin{lemma} Let $\cD$ be an Engel distribution and $\cL$ be the Levi 
form on $\cD^2$. Then the kernel $\cK$ of $\cL$ is one-dimensional and is 
contained in $\cD$.
\end{lemma}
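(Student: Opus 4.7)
My plan is to exploit the very constrained dimensions: $\cD^2$ has rank $3$ and the target $T_pM/\cD^2_p$ of $\cL_p$ has rank $1$, so $\cL_p$ is essentially a single skew-symmetric bilinear form on a $3$-dimensional vector space. Since the rank of any skew-symmetric form is even, $\cL_p$ has rank $0$ or $2$; equivalently, its kernel has dimension $3$ or $1$. Hence it suffices to prove that $\cL_p$ is not identically zero and that no element of its kernel has a nonzero component transverse to $\cD_p$.

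First I would show $\cL_p \not\equiv 0$. Non-vanishing of $\cL_p$ is equivalent to the existence of $v,w \in \cD^2_p$ with $[X_v,X_w]_p \notin \cD^2_p$, i.e.\ to $[\cD^2,\cD^2]_p \not\subset \cD^2_p$. But $\cD^3 = [\cD,\cD^2] \subset [\cD^2,\cD^2]$ and by the Engel condition $\cD^3_p = T_pM \supsetneq \cD^2_p$, so $\cL_p$ is nonzero and has rank exactly $2$. Consequently its kernel $\cK_p$ is $1$-dimensional.

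Next I would show $\cK_p \subset \cD_p$. Pick a local frame $E_1, E_2$ of $\cD$ near $p$ and set $Z = [E_1,E_2]$, so that $E_1, E_2, Z$ form a frame of $\cD^2$ (this uses $\rank \cD^2 = 3$, which forces $Z \notin \cD$). Because $[E_1,E_2] = Z \in \cD^2$, the Levi form satisfies $\cL_p(E_1, E_2) = 0$; so in the basis $(E_1,E_2,Z)$ the matrix of $\cL_p$ has the form
\[
\begin{pmatrix} 0 & 0 & \alpha \\ 0 & 0 & \beta \\ -\alpha & -\beta & 0 \end{pmatrix},
\qquad \alpha = \cL_p(E_1,Z),\ \beta = \cL_p(E_2,Z).
\]
At least one of $\alpha,\beta$ is nonzero, for otherwise $[\cD,\cD^2]_p \subset \cD^2_p$, contradicting $\cD^3_p = T_pM$. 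Any $v = aE_1 + bE_2 + cZ \in \cK_p$ therefore satisfies $c\alpha = c\beta = 0$, forcing $c = 0$, so $v \in \cD_p$.

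The argument is essentially a linear-algebra computation on a single skew form, so I do not anticipate a serious obstacle; the only mildly subtle point is justifying the nontriviality of $\cL$ by correctly using the Engel bracket condition $\cD^3 = TM$ rather than the weaker $\cD^2 \ne \cD$, and checking that $\alpha$ and $\beta$ are indeed well-defined independently of extensions of $E_i, Z$ to vector fields, which is exactly what the remark after the definition of the Levi form guarantees.
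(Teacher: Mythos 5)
Your proof is correct and follows essentially the same route as the paper: nontriviality of $\cL$ from $T_pM=[\cD,\cD^2]_p\subseteq[\cD^2,\cD^2]_p$ plus the even-rank property of skew forms gives the one-dimensional kernel, and the Engel condition $[\cD,\cD^2]=TM$ forces $\cK\subset\cD$. The only difference is cosmetic: the paper phrases the second step as a contradiction ($\cK\not\subset\cD$ would give $\cD^2=\cK\oplus\cD$ and hence $[\cD,\cD^2]\subseteq\cD^2$), while you read off the same fact from the explicit matrix of $\cL$ in the adapted basis $(E_1,E_2,Z)$.
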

\begin{proof}
	First of all  $\cL\neq0$ since $TM=[\cD,\cD^2]\subseteq [\cD^2,\cD^2]
	$. Therefore, the kernel is $1$-dimensional. Assume that 
	$\cK\not\subset
	\cD.$ 
	Then $\cD^2=\cK\op\cD$ and $[\cD,\cD^2]=[\cD,\cD\op\cK]=[\cD,\cD]=\cD^2$ 
	which contradicts the definition of an Engel structure.
\end{proof}
\begin{rem}\label{rem1}
	The kernel of $\cL$ in fact defines a characteristic line field of the 
	distribution $\cD$. Its integral lines are abnormal geodesics of any Engel sub-Riemannian structures defined on $(M,\cD)$. It is known that in the Engel case sufficiently short arcs of those curves are minimizers independently of the sub-Riemannian metric on the distribution~\cite{sussmann}.
\end{rem}

Let $\cK$ be the kernel of the Levi form $\cL$. With every 4-dimensional Engel structure we can associate a canonical, up to an action of $\Z_2\times\Z_2$, frame. Namely, let $X_2$ be one of the two unit vectors in  
$\cK$. Let $X_1$ be an orthogonal complement to $X_2$. Then, the vectors $X_3$ and $X_4$ are defined as follows:
\[ X_3 = [X_1,X_2], \qquad X_4 = [X_1,X_3]. \]

The frame $\{ X_1,X_2,X_3,X_4 \}$ is unique up to the action of a group 
$\Z_2\times\Z_2$ which is generated by the following $2$ elements:
\begin{align*}
& \{ X_1,X_2,X_3,X_4 \}\to \{ -X_1,X_2,-X_3,X_4 \}, \\
& \{ X_1,X_2,X_3,X_4 \}\to \{ X_1,-X_2,-X_3,-X_4 \}.
\end{align*}
Note that we can omit the $\Z_2\times\Z_2$ ambiguity by fixing the frame orientation as well as the orientation of the sub-frame $X_1,X_2$.

Every sub-Riemannian Engel structure induces a canonical filtration of 
the tangent bundle of $M$:
\be
0=F^0\subset F^{-1}\subset F^{-2}\subset F^{-3}\subset F^{-4}=TM,
\ee
where $F^{-1}=\R X_1$, $F^{-2}=\cD$ and $F^{-3}=\cD^2$. The associated graded Lie algebra $\gr F=\bigoplus_i F^{-i}/F^{-i+1}$ 
is isomorphic to the standard nilpotent Engel Lie algebra
\[ [e_1,e_2]=e_3, \,\, [e_1,e_3]=e_4. \] 
The projection $ \Gamma(TM)\to\gr F$ sends $X_i$ to $e_i$.  

Consider now the structure functions of the canonical frame
\be
[X_i,X_j]=X_{i+j}+C_{ij}^k X_k, \,\, i<j.
\ee
The grading $\deg(e_i)=-i$ of the standard Engel Lie algebra induces corresponding grading of the frame $\deg(X_i)=-i$ and of the structure function $\deg(C_{ij}^k)=i+j-k.$ Since the frame is compatible with the filtration $F$, all non-zero $C_{ij}^k$ have a positive degree, i.e. $C_{ij}^k=0$ if $i+j-k \le 0$.

It is well known that the structure constants of a frame together with all covariant derivatives (i.e. derivatives by the vector fields forming the frame) form a set of invariants which is sufficient to solve an equivalence problem\cite{sternberg1983}.
Not all $C_{ij}^k$ are independent. Due to Jacobi identity, we can express them using 6 basic invariants $T_1, \dots , T_6$, which we describe in the table below.
\begin{table}[h]
\caption{Basic invariants of Sub-Riemannian Engel structures}
\begin{center}
\begin{tabu}{ cc }
\toprule
Degree & Invariant \\
\midrule
1 & $T_1 = C_{14}^4$ \\
\midrule
2 & $T_2 = C_{23}^3$ \\
  & $T_3 = C_{14}^3$ \\
\midrule
3 & $T_4 = C_{23}^2$ \\
  & $T_5 = C_{14}^2$ \\
\midrule
4 & $T_6 = C_{23}^1$ \\
\bottomrule
\end{tabu}
\end{center}
\end{table}
\\
The next theorem proves this fact. 
\begin{theorem}\label{thm:1}
For every oriented sub-Riemannian Engel structure 
$(M,\cD,g)$ with fixed orientation on $\cD$ there exists a canonical frame $\{X_1,X_2,X_3, X_4 \}$ given 
by conditions
\be\label{e:triv}
	[X_1,X_2]= X_3,\,\, [X_1,X_3] = X_4,\,\,
	[X_2,X_3]\in \spn{X_1,X_2,X_3},
\ee
such that orientations of $\{X_1,X_2,X_3, X_4 \}$ and $\{X_1,X_2\}$ are compatible with orientations of $M$ and $\cD$ respectively.

Apart from \eqref{e:triv} the structure equations of the canonical frame are:
\be\label{eq:struct}
\begin{aligned}
  &[X_1,X_4]=C^1_{14} X_1+T_5 X_2+T_3 X_3+T_1 X_4 \\
	&[X_2,X_3]=T_6 X_1 + T_4 X_2 + T_2 X_3 \\
	&[X_2,X_4]=X_1(T_6) X_1 + X_1(T_4) X_2 + (T_4 + X_1(T_2) ) X_3 
	+ T_2 X_4 \\
	&[X_3,X_4]=C_{34}^1 X_1 + C_{34}^2 X_2 + C^3_{34} X_3 + 
	(T_4+2 X_1(T_2)-X_2(T_1)) X_4,
\end{aligned}
\ee
where 
\begin{align*}
C^1_{14}&=\frac12 \left(T_1 T_4 + T_1 X_1(T_2) - 3 X_1(T_4) + X_2(T_3) + X_3(T_1) - 
X_1^2(T_2)\right),
\\
C^3_{34}&=-\frac12\left(T_1 T_4 + T_1 X_1(T_2) - X_1(T_4) + X_2(T_3) - X_3(T_1) - 
X_1^2(T_2)\right),
\\
C^2_{34}&=T_2 T_5 - T_3 T_4 - T_1 X_1(T_4) - X_2 (T_5) + X_1^2(T_4) ,
\\
C^1_{34}&= T_2 C^1_{14} - T_6 T_3 - T_1 X_1(T_6) - 
X_2(C^1_{14})+ X_1^2(T_6).
\end{align*}
In particular, the structure constants depend only on $T_i$, $1\le i \le 6$ and their derivatives along $X_j$.
\end{theorem}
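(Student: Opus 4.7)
The plan is to first construct the frame intrinsically from the geometric data, then turn the Jacobi identity into a machine that expresses every remaining structure function as a universal polynomial in the $T_i$ and their iterated $X_j$-derivatives.

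\emph{Step 1 (canonical frame).} Using the preceding lemma, $\cK = \ker \cL$ is a line bundle contained in $\cD$. Choose $X_2$ a unit section of $\cK$ (unique up to sign) and $X_1$ a unit section of $\cD$ orthogonal to $X_2$ (again unique up to sign), compatible with the fixed orientation of $\cD$. Define $X_3 := [X_1,X_2]$ and $X_4 := [X_1,X_3]$. Because $X_2$ spans the kernel of the Levi form on $\cD^2$, the bracket $[X_2,Y]$ lies in $\cD^2$ for every $Y \in \cD^2$; applied to $Y = X_3$ this gives $[X_2,X_3] \in \spn{X_1,X_2,X_3}$. The growth conditions of the Engel distribution guarantee that $X_3 \notin \cD$ and $X_4 \notin \cD^2$, so $\{X_1,X_2,X_3,X_4\}$ is a global frame. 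Compatibility with the orientation of $M$ eliminates the residual $\mathbb{Z}_2 \times \mathbb{Z}_2$ ambiguity described earlier.

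\emph{Step 2 (naming the free coefficients).} Write $[X_1,X_4] = C^1_{14}X_1 + T_5 X_2 + T_3 X_3 + T_1 X_4$ and $[X_2,X_3] = T_6 X_1 + T_4 X_2 + T_2 X_3$. The remaining nontrivial brackets $[X_2,X_4]$ and $[X_3,X_4]$ are a~priori general linear combinations of the frame. By bi-degree counting with respect to the Engel filtration $F$, the degree assignment $\deg(X_i) = -i$ forces $C^k_{ij}$ to vanish whenever $i+j-k \leq 0$; this is the only \emph{a priori} vanishing and is consistent with the weight table, so the seven unknowns $C^1_{14},\,C^1_{34},\,C^2_{34},\,C^3_{34},\,C^4_{34}$ and the four components of $[X_2,X_4]$ still need to be pinned down.

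\emph{Step 3 (Jacobi identity bookkeeping).} I would now apply the four Jacobi identities $\mathrm{Jac}(X_i,X_j,X_k)$ with $(i,j,k) = (1,2,3),\,(1,2,4),\,(1,3,4),\,(2,3,4)$ in turn and read off each component.
\begin{itemize}
\item $\mathrm{Jac}(X_1,X_2,X_3)$ yields $[X_2,X_4] = [X_1,[X_2,X_3]]$, which on expansion gives precisely the expression in \eqref{eq:struct}.
\item $\mathrm{Jac}(X_1,X_2,X_4)$ yields $[X_3,X_4] = [X_1,[X_2,X_4]] - [X_2,[X_1,X_4]]$. Reading the four components produces: an $X_4$-component that gives $C^4_{34} = T_4 + 2X_1(T_2) - X_2(T_1)$; an $X_3$-component that gives one linear relation between $C^1_{14}$ and $C^3_{34}$; an $X_2$-component that determines $C^2_{34}$; and an $X_1$-component that expresses $C^1_{34}$ in terms of $C^1_{14}$.
\item $\mathrm{Jac}(X_1,X_3,X_4)$ reduces to $[X_1,[X_3,X_4]] = [X_3,[X_1,X_4]]$; its $X_4$-component supplies the \emph{second} independent linear relation between $C^1_{14}$ and $C^3_{34}$, involving $X_3(T_1)$.
\end{itemize}
Solving the two-by-two linear system in $C^1_{14}, C^3_{34}$ coming from the last two bullets gives the half-factor symmetric/antisymmetric combinations displayed in the theorem; substituting the resulting $C^1_{14}$ back into the $X_1$-component of $\mathrm{Jac}(X_1,X_2,X_4)$ yields the stated formula for $C^1_{34}$. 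The remaining identity $\mathrm{Jac}(X_2,X_3,X_4)$ is then automatically satisfied and serves only as a consistency check.

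\emph{Main obstacle.} The conceptual part (Step 1) is short; the bulk of the work is purely algebraic. The genuine difficulty is the bookkeeping in Step 3: one must track roughly twenty scalar equations, taking care with signs of brackets and with second-order derivatives such as $X_1^2(T_2)$ and mixed terms like $X_1X_2(T_1)$, and exploit the commutator identity $X_1X_2 - X_2X_1 = X_3$ (as operators on functions) to rewrite mixed derivatives when necessary so that the final expressions match the symmetric form stated in the theorem.
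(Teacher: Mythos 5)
Your proposal follows essentially the same route as the paper: normalize the frame via the kernel of the Levi form and the metric, then work through the Jacobi identities to express every remaining structure function in terms of $T_1,\dots,T_6$ and their covariant derivatives. The only difference is organizational --- the paper proceeds degree by degree and explicitly omits the computations in degrees $\ge 4$, whereas you proceed Jacobi triple by Jacobi triple and correctly identify which component pins down which coefficient (in particular that $[X_2,X_4]=[X_1,[X_2,X_3]]$ gives that bracket outright, and that the $X_4$-component of $\mathrm{Jac}(X_1,X_3,X_4)$ supplies the second relation needed to separate $C^1_{14}$ from $C^3_{34}$) --- so this is a sound blueprint for the same proof.
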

\begin{proof}
The normalization conditions imply some restrictions on the form of structure functions:  equality $C^4_{12} = C^2_{12}=C^1_{12}=0$ follows from $[X_1,X_2]=X_3$, equality $C^3_{13}=C^2_{13}=C^1_{13}=0$ follows 
from 
$[X_1,X_3]=X_4$ and $C^4_{23}=0$ follows from the fact that $X_2$ 
generates the kernel of the Levi form on $\cD^2$. 

The Jacobi identity yields relations among the $C^k_{ij}$, which we compute in order according to their degree. In degree 1 there remains only one non-zero structure function $C_{14}^4$ and we denote it by $T_1$. 
In degree 2 there is only one non-trivial relation which follows from the Jacobi identity. Denoting by $[Y,Z]^i$ the projection of $[Y,Z]$ on $X_i$ we have:
\[ 0= [X_1,[X_2,X_3]]^4 + [X_2,[X_3,X_1]]^4 = [X_1,C_{23}^3X_3]^4-[X_2,X_4]^4=C^3_{23}-C^4_{24}. \]
Therefore we are allowed to use the notation $C^3_{23}=C^4_{24}=T_2$ and $C^3_{14}=T_3$. In degree 3 there are 2 relations. The first one is
\begin{multline*} 0= [X_1,[X_2,X_3]]^3 + [X_2,[X_3,X_1]]^3 =
\\ [X_1,C^2_{23}X_2+C_{23}^3X_3]^3-C_{24}^3=C^2_{23}+X_1(C_{23}^3)-C^3_{24}
\end{multline*}
while the second one is
\begin{multline*} 0= [[X_1,X_2],X_4]^4 + [[X_2,X_4],X_1]^4 +  [[X_4,X_1],X_2]^4  =
\\ 
C_{34}^4 + [C^3_{24}X_3 + C_{24}^4X_4,X_1]^4 - [C_{14}^4X_4,X_2]^4= C_{34}^4 - C^3_{24}-X_1(C^3_{23})+X_2(C_{14}^4)
\end{multline*}
Therefore if we define $T_4=C_{23}^2$ and $T_5=C_{14}^2$ then $C^3_{24}=T_4+X_1(T_2)$ and $ C_{34}^4 = T_2+2X_1(C^3_{23})-X_2(T_1)$. 

We omit the computations for higher degrees since they are more involved, but straightforward.
\end{proof}

Let us consider the classification problem for the
left-invariant Engel sub-Rieman\-nian structures on Lie groups. The structure functions are constant in this case. The following general 
form of the structure equations for the canonical left-invariant frame is a 
direct consequence of \cref{thm:1}.
\begin{proposition} \label{prop2}
Let $\{X_1,X_2,X_3, X_4 \}$	be a canonical left-invariant frame for a left-invariant Engel sub-Riemannian structure. Then the structure equations of the 
frame are:
\be
\begin{aligned}
	&[X_1,X_2]= X_3,\,\, [X_1,X_3] = X_4,\\
  &[X_1,X_4]=\frac12 A X_1+T_5 X_2+T_3 X_3+T_1 X_4, \\
	&[X_2,X_3]=T_6 X_1 + T_4 X_2 + T_2 X_3, \\
	&[X_2,X_4]=T_4 X_3 + T_2 X_4, \\
	&[X_3,X_4]=C X_1+B X_2-\frac12 A X_3+T_4 X_4,
\end{aligned}
\ee
where $A=T_1 T_4$, $B=T_2 T_5-T_3 T_4$, $C=
\frac12 T_1 T_2 T_4-T_3 T_6$.
\end{proposition}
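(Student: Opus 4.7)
The plan is to derive Proposition~\ref{prop2} by specializing Theorem~\ref{thm:1} to the left-invariant setting. The key point to establish first is that the canonical frame $\{X_1, X_2, X_3, X_4\}$ is itself left-invariant. Since both $\cD$ and the metric $g$ are preserved by left translations, the Levi form on $\cD^2$ and its kernel $\cK$ are left-invariant. Thus the unit vector $X_2$ spanning $\cK$ and the unit vector $X_1$ in its orthogonal complement within $\cD$ are left-invariant (we fix once and for all the finite $\Z_2\times\Z_2$ ambiguity by a choice at the identity). The remaining vectors $X_3 = [X_1, X_2]$ and $X_4 = [X_1, X_3]$ are Lie brackets of left-invariant fields, hence left-invariant. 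Consequently each $T_i$ is a left-invariant smooth function on the Lie group, which forces $T_i$ to be constant, and therefore $X_j(T_i) = 0$ for all $i, j$.

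Once constancy is in hand, the derivation of the claimed structure equations is a pure substitution into the formulas provided by Theorem~\ref{thm:1}. Setting every partial derivative to zero, the expressions for the undetermined coefficients collapse to
\begin{align*}
C^1_{14} &= \tfrac12 T_1 T_4 = \tfrac12 A, \\
C^3_{34} &= -\tfrac12 T_1 T_4 = -\tfrac12 A, \\
C^2_{34} &= T_2 T_5 - T_3 T_4 = B, \\
C^1_{34} &= T_2 \, C^1_{14} - T_3 T_6 = \tfrac12 T_1 T_2 T_4 - T_3 T_6 = C.
\end{align*}
For the remaining brackets, the coefficient line of $[X_2, X_4]$ in Theorem~\ref{thm:1} is $X_1(T_6)\, X_1 + X_1(T_4)\, X_2 + (T_4 + X_1(T_2))\, X_3 + T_2 X_4$, which collapses to $T_4 X_3 + T_2 X_4$, and the $X_4$-coefficient of $[X_3, X_4]$, originally $T_4 + 2X_1(T_2) - X_2(T_1)$, reduces to $T_4$. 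This reproduces all four non-trivial structure equations of Proposition~\ref{prop2}.

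The only conceptual step is the left-invariance of the canonical frame; once that is secured, no real obstacle remains, since the rest is a mechanical plug-in into expressions already computed in Theorem~\ref{thm:1}. The derivation therefore amounts to organizing the substitution clearly and recording the shorthands $A, B, C$ for the resulting constant combinations of $T_i$.
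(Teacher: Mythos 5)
Your proposal is correct and matches the paper's approach: the paper simply asserts that the structure functions are constant in the left-invariant case and that the proposition is a direct consequence of Theorem~\ref{thm:1}, which is exactly the substitution you carry out. Your explicit justification that the canonical frame is itself left-invariant (hence the $T_i$ are constant and all derivatives $X_j(T_i)$ vanish) fills in the one step the paper leaves implicit, and your resulting expressions for $C^1_{14}$, $C^2_{34}$, $C^3_{34}$, $C^1_{34}$ and the collapsed brackets all check out.
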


Substituting the structure constants from 
\cref{prop2} into the Jacobi formula we obtain a system of 
restrictions on $T_i$:
\begin{equation}\label{eq4}
\begin{aligned}
0 &= T_1 T_6  + 2 T_2 T_4 ,
\\
0 &= T_1^2 T_4  + 4T_2 T_5 ,
\\
0 &= T_1 T_3 T_4 - T_1 T_2 T_5 + 2 T_4 T_5 ,
\\
0 &= T_1 T_4^2 - T_1^2 T_2 T_4  + 2 T_1 T_3 T_6 + 2T_5 T_6 
,
\\
0 &= T_1 T_4^2 + 4 T_2^2 T_5 - 4 T_2 T_3 T_4 + 2T_5 T_6,
\\ 
0 &= T_1 T_2^2 T_4 + T_1 T_4 T_6 - 2 T_2 T_3 T_6. 
\end{aligned}
\end{equation} 
Solving the system above we get the classification of the left-invariant 
sub-Riemannian Engel structures.
\begin{theorem}
Any left-invariant sub-Riemannian Engel structure is 
uniquely locally defined by the structure constants $T_i$ and belongs to at least one family from Table \ref{tbl_a}. We list in Table \ref{tbl_a} restrictions on $T_i$ that define a family as well as corresponding non-trivial structure equations.
\begin{table}[h]
\caption{Classification of left-invariant Sub-Riemannian Engel structures}
\label{tbl_a}
\begin{tabu}{ l  *2{>{$} l<{$}} }
\toprule
\mbox{\#} & \mbox{Restrictions} &\mbox{Structure Equations}\\ 
& &\mbox{Excluding }[X_1,X_2]=X_3,\,\,[X_1,X_3]=X_4\\
\midrule
I. & T_2 = T_4 = T_6 = 0 &  [X_1,X_4] = T_5 X_2 + T_3 X_3 + T_1 X_4
\\
\midrule
II. & T_4 = T_6 = T_5 = 0 & 	[X_1,X_4] = T_3 X_3 + T_1 X_4,
\\
& &[X_2,X_3] = T_2 X_3,
\\
& & [X_2,X_4]= T_2 X_4 	
\\ 
\midrule	
III. & T_1 = T_2 = T_5 = 0 &
[X_1,X_4] = T_3 X_3,
\\
& & [X_2,X_3] =T_6 X_1 + T_4 X_2, 
\\
& &   [X_2,X_4] = T_4 X_3,
\\
& & [X_3,X_4] = -T_6 T_3 X_1 -T_4 T_3 X_2 + T_4 X_4
\\
\midrule
IV. & T_1 = T_3 = 0, & 	[X_2,X_3] =T_6 X_1 + T_2 X_3,  \\
&  T_4 = T_5 = 0 & [X_2,X_4] =  T_2 X_4. \\
\midrule
V. & T_1\neq 0, &
 [X_1, X_4] = T_1X_4-\frac{T_1^3+8T_5}{4T_1}X_3 +T_5X_2-\frac{2T_2T_5}{T_1}X_1,\\
& T_4 = \frac12 \frac{T_2(T_1^2+4T_3)}{T_1},&
 [X_2, X_3] = T_2X_3-\frac{4T_2T_5}{T_1^2}X_2+\frac{8T_2^2T_5}{T_1^3}X_1, \\
& T_5 = -\frac18 T_1^3-\frac12 	 T_1 T_3,&
 [X_2, X_4] = T_2X_4-\frac{4T_2T_5}{T_1^2}X_3,\\
& T_6 = -\frac{T_2^2(T_1^2+4T_3)}{T_1^2} 
& [X_3, X_4] = \frac{2T_2T_5}{T_1}\left(X_3 -\frac{2}{T_1} X_4-\frac{4T_5}{T_1^2} X_2+\frac{8T_2T_5}{T_1^3} X_1\right)\\
\bottomrule
 \end{tabu}
 \end{table}
\end{theorem}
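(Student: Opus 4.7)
The uniqueness claim is almost immediate: once the orientations of $M$ and $\mathcal{D}$ are fixed, Theorem~\ref{thm:1} yields a canonical frame whose structure constants are, by Proposition~\ref{prop2}, polynomials in $T_1,\ldots,T_6$. Hence these six numbers determine the Lie algebra $\mathfrak{g}$ completely, and Lie's third theorem produces a unique simply connected Lie group $G$ carrying the corresponding left-invariant Engel sub-Riemannian structure (with $X_1,X_2$ an orthonormal basis of $\mathcal{D}$); local uniqueness then follows. The substantive work is the classification itself, which amounts to solving the six polynomial relations \eqref{eq4} in the six unknowns $T_1,\ldots,T_6$. The natural approach is case analysis, branching first on whether $T_1$ vanishes; note that the theorem only requires each structure to lie in \emph{at least one} family, so overlaps are allowed.

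\emph{Case $T_1=0$.} The first two relations of \eqref{eq4} reduce to $T_2 T_4 = T_2 T_5 = 0$. If $T_2 = 0$, the remaining four relations collapse to $T_4 T_5 = T_5 T_6 = 0$: the subcase $T_5=0$ (with $T_3,T_4,T_6$ free) is family III, and the subcase $T_5 \neq 0$ forces $T_4 = T_6 = 0$ and produces family I. If instead $T_2 \neq 0$, then $T_4 = T_5 = 0$ and the sixth relation of \eqref{eq4} reduces to $-2T_2 T_3 T_6 = 0$: the subcase $T_6 = 0$ gives family II, while $T_3 = 0$ gives family IV.

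\emph{Case $T_1 \neq 0$.} The first two relations of \eqref{eq4} solve uniquely for $T_6 = -2T_2 T_4/T_1$ and $T_4 = -4T_2 T_5/T_1^2$. Substituting these into the third relation and simplifying gives
\[
-\frac{T_2 T_5}{T_1^2}\bigl(T_1^3 + 4 T_1 T_3 + 8 T_5\bigr) = 0.
\]
If $T_2 = 0$ or $T_5 = 0$ the solution collapses to a previous case (families I or II); otherwise $T_5 = -\tfrac18 T_1^3 - \tfrac12 T_1 T_3$, and back-substitution produces exactly the formulas for $T_4$ and $T_6$ appearing in family V. The main obstacle is verifying that the remaining three relations of \eqref{eq4} are then automatic consequences of these formulas. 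One expects this on dimensional grounds (family V should be a genuine three-parameter family in $T_1,T_2,T_3$), but the verification is a direct computation of the type where sign or factor errors easily creep in. Once the case analysis is complete, each row of Table~\ref{tbl_a} must still be checked to solve \eqref{eq4} identically, so that the corresponding structure constants form a bona fide Lie algebra to which Lie's third theorem applies.
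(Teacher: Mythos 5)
Your proposal is correct and follows the same route the paper takes implicitly: the paper derives the Jacobi relations \eqref{eq4} and then simply asserts ``solving the system above we get the classification,'' so your case analysis (branching on $T_1$, then on $T_2$ and $T_5$) is exactly the omitted content. The one step you flag as unverified does go through: writing $s=T_1^2+4T_3$, family V gives $T_4=T_2 s/(2T_1)$, $T_5=-T_1 s/8$, $T_6=-T_2^2 s/T_1^2$, and a direct substitution shows each of the last three relations of \eqref{eq4} reduces to a multiple of $s/(2T_1)-T_1/2-2T_3/T_1=0$ or $1/2-s/(2T_1^2)+2T_3/T_1^2=0$, hence vanishes identically.
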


\section{Geodesic flow on Engel manifolds and its integrability}
\label{sec:2}

The problem of finding length minimizers between $q_0,q_T\in M$  
is equivalent to the optimal control problem
\begin{equation}
\label{eq:control_sys}
\dot{q} = u_1 X_1(q) + u_2 X_2(q), \qquad u_1,u_2 \in L^\infty\left([0,T]\right),\,\, q(t)\in M,
\end{equation}
\begin{equation}
q(0) = q_0, \qquad q(T) = q_T,
\end{equation}
\begin{equation}
\label{eq:length}
l(q) = \int_0^T \sqrt{u_1^2 +u_2^2}dt \rightarrow \min,
\end{equation}
where $u_i$ are the controls. 

\begin{definition}
An \emph{admissible curve} is a Lipschitz curve that satisfies \eqref{eq:control_sys} at almost every point. 
\end{definition}

It is well known that the minimum exists and that after a reparameterization we can assume that minimal curves have constant speed $|\dot q|^2 = u_1^2 +u_2^2 = const$. Then a direct consequence of the Cauchy-Schwarz inequality implies that an admissible curve with constant speed is a length minimizer if and only if it minimizes the energy functional
\begin{equation}
J(q) = \int_0^T \frac{u_1^2 +u_2^2}{2}dt
\end{equation}
with $T$ fixed~\cite{ABB}.

\begin{definition}
A \emph{geodesic} is an admissible curve parametrized by constant speed whose sufficiently small arcs are length minimizers.
\end{definition}
In order to describe sub-Riemannian geodesics we use the Pontryagin maximum principle (PMP) which is equivalent to the usual Lagrange multiplier rule in constrained optimization. To state it we need some definitions.

\begin{definition}
We say that the pair control-trajectory $(\tilde u(t),\tilde q(t))$ is an \emph{optimal pair}, if $\tilde{q}(t)$ is a length minimizer and satisfies \eqref{eq:control_sys} with control function $u=\tilde{u}(t)$.
\end{definition}

Consider the cotangent bundle $\pi\colon T^*M \to M$ and the coordinate functions 
\[h_i = \langle \lambda, X_i \rangle,\,\,\lambda \in T^* M.\]
\begin{definition}
The \emph{Hamiltonian of the maximum principle} is a family of smooth functions, affine on fibres, parameterized by controls $(u_1,u_2)\in \R^2$ and a real number $\nu \leq 0$, given by
$$
H_u (\lambda,\nu) = \langle \lambda, u_1 X_1 + u_2 X_2 \rangle + \frac{\nu}{2}(u_1^2 + u_2^2) = u_1h_2 + u_2 h_2 + \frac{\nu}{2}(u_1^2 + u_2^2).
$$
\end{definition}
\begin{theorem}[PMP,~\cite{as}]
\label{thm:pmp} 
If a pair $(\tilde u(t),\tilde q(t))$ is optimal in a minimization problem \eqref{eq:control_sys}-\eqref{eq:length}, then there exists a 
Lipschitzian curve $\lambda(t) \in T^*_{\tilde{q}(t)} M$ and a number 
$\nu \leq 0$, s.t. the following conditions are satisfied
\begin{enumerate}
\item\label{pmp:1} $(\lambda(t),\nu)\neq 0$;
\item $\dot\lambda(t) = \vec{H}_{\tilde{u}(t)}(\lambda(t))$;
\item\label{pmp:3} $H_{\tilde{u}(t)} = \max_{u\in \R^2} 
H_u(\lambda(t),\nu) $.
\end{enumerate}
\end{theorem}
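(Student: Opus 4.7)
The plan is to follow the geometric derivation of PMP via needle variations and a convex separation argument, in the style of Agrachev--Sachkov. The starting point is to regard the endpoint map $F_T\colon u(\cdot)\mapsto q(T;u)$ and the energy functional $J$ together as a smooth map from the Banach space of controls into $M\times\R$. If $(\tilde u,\tilde q)$ is optimal, then no admissible first-order perturbation of $\tilde u$ can simultaneously keep the endpoint at $q_T$ and strictly lower $J$, and the task of PMP is to extract a non-zero covector $(\lambda(T),\nu)$ certifying exactly this geometric obstruction.

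First I would introduce needle (Pontryagin) variations: for a Lebesgue point $\tau\in(0,T)$ and a value $v\in\R^2$, replace $\tilde u$ by $v$ on a short interval $[\tau-\epsilon,\tau]$. A first-order Taylor expansion of the flow of the time-dependent vector field $X_u=u_1X_1+u_2X_2$ shows that the corresponding displacement of $\bigl(q(T),J(u)\bigr)$ is, to leading order in $\epsilon$,
\[
\Bigl(P_\tau^T\bigl(X_v(\tilde q(\tau))-X_{\tilde u(\tau)}(\tilde q(\tau))\bigr),\ \tfrac12\bigl(|v|^2-|\tilde u(\tau)|^2\bigr)\Bigr),
\]
where $P_\tau^T$ is the differential at time $T$ of the reference flow starting at time $\tau$. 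Taking finite convex combinations and concatenations of such needles at distinct times produces a convex cone $K\subset T_{q_T}M\times\R$ of first-order admissible variations of the (endpoint, cost) pair. Optimality of $(\tilde u,\tilde q)$ forces the open half-line $\{0\}\times(-\infty,0)$ to miss the interior of $K$, so by a standard convex separation theorem there exists a non-zero pair $(\lambda_T,\nu)\in T^*_{q_T}M\times\R_{\le 0}$ with $\langle\lambda_T,\xi\rangle+\nu s\le 0$ for every $(\xi,s)\in K$.

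Next I would propagate $\lambda_T$ backwards along $\tilde q$ by the linear adjoint ODE $\dot\lambda=\vec H_{\tilde u(t)}(\lambda)$, which is the Hamiltonian lift of the reference trajectory to $T^*M$. Specializing the separation inequality to a single needle at $\tau$ yields, for almost every $\tau$ and every $v\in\R^2$,
\[
\langle\lambda(\tau),X_v(\tilde q(\tau))\rangle+\tfrac{\nu}{2}|v|^2\le \langle\lambda(\tau),X_{\tilde u(\tau)}(\tilde q(\tau))\rangle+\tfrac{\nu}{2}|\tilde u(\tau)|^2,
\]
which is exactly the pointwise maximality condition of the statement; the Lipschitz regularity of $\lambda$ and the preservation of $(\lambda,\nu)\neq 0$ are immediate consequences of the fact that $\lambda$ solves a linear ODE with locally bounded measurable coefficients. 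The main technical obstacle is the careful construction of the cone $K$ when the controls are only $L^\infty$: one has to verify via a Filippov-type measurable selection and an implicit-function argument on the control space that enough needle perturbations remain available to make the separation informative also in the abnormal regime $\nu=0$, where the cost direction plays no role and the entire information must come from the endpoint component.
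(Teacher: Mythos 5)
The paper does not prove this statement at all: it is quoted verbatim as a known result with a citation to Agrachev--Sachkov, and nothing in the paper depends on the internals of its proof. So there is no ``paper proof'' to compare against; what you have written is an outline of the classical needle-variation proof, and it should be judged as a self-contained argument.

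As such an argument it has the right architecture (needle variations, convex cone of first-order (endpoint, cost) displacements, separation, backward propagation by the adjoint flow), and the derivation of the maximality condition, the Lipschitz regularity of $\lambda$, and the preservation of $(\lambda,\nu)\neq 0$ along the linear adjoint ODE are all handled correctly. But there is a genuine gap at the single most delicate step: the claim that ``optimality forces the open half-line $\{0\}\times(-\infty,0)$ to miss the interior of $K$.'' The cone $K$ is generated by \emph{one-sided} perturbations (needle variations enter with non-negative weights only), so its elements are not realized by a map to which the implicit function theorem applies; the standard way to show that an interior direction of $K$ is actually attained by an admissible trajectory --- which is what produces the contradiction with optimality --- is a topological open-mapping argument based on Brouwer's fixed point theorem applied to a finite-dimensional conic approximation. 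Your closing remark about ``an implicit-function argument on the control space'' does not supply this; without the Brouwer step the separating covector $(\lambda_T,\nu)$ is not known to exist, and the entire proof collapses. A complete write-up would also need to verify the additivity of needle variations at distinct Lebesgue points (so that $K$ is genuinely convex) and to note that, the control set being all of $\R^2$, needles of arbitrary amplitude are admissible, which is what makes condition \eqref{pmp:3} force $h_1=h_2=0$ in the abnormal case later in the paper.
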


\begin{definition}
The curve $\lambda(t) \in T^*_{\tilde{q}(t)} M$ from the formulation of PMP is called an \emph{extremal}. 
\end{definition}

\begin{definition}
If an extremal $\lambda(t)$ satisfies the PMP with $\nu = 0$ it is called 
\emph{abnormal} or \emph{singular}, otherwise we say that it is \emph{normal}. We say that a projection $\tilde{q}(t)$ of $\lambda(t)$ is normal (resp. abnormal) if the corresponding extremal $\lambda(t)$ is normal (resp. abnormal). A curve $\tilde{q}(t)$ is said to be strictly abnormal (resp. strictly normal) if it satisfies the PMP with some abnormal (resp. normal) $\lambda(t)$ and is not a projection of some normal (resp. abnormal) extremal at the same time.
\end{definition}
A projection of a normal extremal is always a geodesic. In the case of sub-Riemannian Engel structures a projection of any abnormal extremal is a locally minimizing curve i.e. it is a geodesic~\cite{ABB,sussmann}, although it is not true in general for other sub-Riemannian structures.

We now look for sub-Riemannian structures of Engel type admitting strictly abnormal geodesics.
\begin{theorem}\label{prop:2}
Abnormal geodesics of an Engel sub-Riemannian structure are integral curves of $X_2$. An abnormal geodesic is strict if and only if $T_4 \neq 0$ along the geodesic.
\end{theorem}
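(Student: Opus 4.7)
The plan is to invoke the Pontryagin maximum principle (Theorem \ref{thm:pmp}) and exploit the bracket relations of the canonical frame fixed in Theorem \ref{thm:1}. I set $h_i=\langle\lambda,X_i\rangle$ and use the convention $\{h_X,h_Y\}=h_{[X,Y]}$ for the Poisson bracket on $T^*M$, so that along an extremal $\dot h_i=\{H_{\tilde u},h_i\}$ with $H_{\tilde u}=u_1h_1+u_2h_2+\tfrac{\nu}{2}(u_1^2+u_2^2)$.

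For the first assertion I take $\nu=0$. The maximality condition in the PMP immediately yields $h_1=h_2\equiv 0$ along the extremal. Differentiating these identities along the flow of $H_{\tilde u}$ and using $[X_1,X_2]=X_3$ gives $u_2 h_3=0$ and $u_1 h_3=0$; since a geodesic is non-constant, $h_3\equiv 0$. Differentiating $h_3=0$ once more, and using $[X_1,X_3]=X_4$ together with
\[ [X_2,X_3]=T_6X_1+T_4X_2+T_2X_3, \]
whose associated Hamiltonian already vanishes on $\{h_1=h_2=h_3=0\}$, produces $u_1 h_4=0$. The non-triviality condition \ref{pmp:1} of Theorem \ref{thm:pmp} rules out $h_4=0$ (otherwise $\lambda\equiv 0$), so $u_1\equiv 0$. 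After the constant-speed reparametrisation this gives $\dot q=u_2 X_2(q)$ with constant $u_2$, i.e., the geodesic is an integral curve of $X_2$.

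For the strictness criterion I then ask when such an integral curve is \emph{also} the projection of a normal extremal. Normalising $\nu=-1$, the maximality condition forces $u_i=h_i$, so a normal lift must satisfy $h_1\equiv 0$ and $h_2\equiv c\neq 0$ along the curve. The equation $\dot h_1=\{\tfrac12(h_1^2+h_2^2),h_1\}=-h_2 h_3$ then forces $h_3\equiv 0$, and one further differentiation combined with the expression for $[X_2,X_3]$ gives
\[ \dot h_3 = h_2\{h_2,h_3\}=h_2\bigl(T_6 h_1+T_4 h_2+T_2 h_3\bigr)=T_4\,h_2^2 \]
along the geodesic. Hence a normal lift is consistent if and only if $T_4$ vanishes along the curve, which is exactly the claimed dichotomy.

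The only technical care is with the sign convention for the Poisson bracket, so that one does not misidentify which combination of $h_i$ has to vanish first. Once $h_1=h_3=0$ and $h_2=c$ are fixed and $T_4\equiv 0$ is assumed, the remaining coordinate $h_4$ satisfies a scalar linear ODE along the curve (coefficients read from $[X_2,X_4]$), so sufficiency is automatic: a bona-fide normal extremal exists and the abnormal geodesic is then non-strict.
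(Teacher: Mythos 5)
The proposal is correct and follows essentially the same route as the paper: it applies the PMP, derives $h_1=h_2=h_3\equiv 0$ and then $u_1\equiv 0$ from the non-triviality condition in the abnormal case, and characterizes strictness by checking when the normal system ($u_i=h_i$) admits a lift with $h_1=h_3=0$, $h_2=\mathrm{const}\neq 0$, which forces $T_4h_2^2=0$. Both the necessity and the sufficiency (reduction to the scalar linear ODE for $h_4$) match the paper's argument.
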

\begin{proof}
Consider a geodesic $q(t)$ and let $\lambda(t)=(q(t),h(t))$ be its extremal. The Hamiltonian system of the PMP for a sub-Riemannian Engel structure is given by
\begin{align*}
&\dot{q} = u_1 X_1(q) + u_2 X_2(q),\\
&\dot{h}_i = \{H_u,h_i\}.
\end{align*}
where the Lie-Poisson bracket of vertical coordinate functions $h_i$ 
depends only on the structure functions of $X_i$:
\[\{h_i,h_j\} = \langle\lambda, [X_i,X_j] \rangle= C_{ij}^k(q) h_k. \]
Using the structure equations \cref{eq:struct} and the Leibniz rule we obtain
\begin{align}
\dot{q} &= u_1 X_1(q) + u_2 X_2(q)\nonumber,\\
\dot{h}_1 &= -u_2 h_3\nonumber,\\
\dot{h}_2 &= u_1 h_3, \label{eq:h_sys} \\
\dot{h}_3 &= u_1 h_4 + u_2\left(T_6 h_1 + T_4 h_2 + T_2 
h_3\right),\nonumber \\
\dot{h}_4 &= u_1 \left( C^1_{14} h_1 + T_5h_2 + T_3 h_3 + T_1 
h_4 
\right) \nonumber
\\
       & + u_2\left(X_1(T_6) h_1 + X_1(T_4) h_2 + (T_4  + X_1(T_2))h_3 
       + 
       T_2 
h_4\right).\nonumber
\end{align}
To find the optimal controls $u=(u_1,u_2)$ we use condition 
\eqref{pmp:3} from \cref{thm:pmp}.
When $\nu = 0$, the 
Hamiltonian is of the form
\[
H_u = u_1 h_1 + u_2 h_2.
\]

The only possibility for the maximum to be attained is when $h_1 \equiv h_2 
\equiv 0$. This implies $\dot h_1 = -u_2 h_3=0$ and $\dot h_2= u_1 h_3=0$. Since we are interested in 
curves with non-zero constant speed $\dot q= u_1^2 + u_2^2$ we obtain that $h_3\equiv 0$. The forth equation of \eqref{eq:h_sys} implies that either 
$u_1 \equiv 0$ or $h_4 \equiv 0$. But the non-triviality condition \ref{pmp:1} of the PMP yields $h_4\neq 0$ if $\nu=0$. Therefore, $u_1 \equiv 0$ and projections of abnormal extremals are integral curves of $X_2$. Along these curves the last equation reduces to  
\[\dot{h}_4 = u_2 T_2 h_4\]
whose solutions are sign-definite for non-zero initial data. Therefore 
the non-triviality condition is satisfied for all times and  $(q(t),h(t))$ is an abnormal extremal. Moreover $q(t)$ is always a length minimizer the in Engel case~\cite{ABB}, i.e. it is always an abnormal geodesic.

Let us consider the case $\nu \neq 0$. Without loss of 
generality we can normalize $(\lambda,\nu)$ in such a way that $\nu = 
-1$.  Then the maximum is achieved when
$$
\frac{\p H_u}{\p u_i} = h_i - u_i = 0 \iff u_i = h_i, \qquad i=1,2. 
$$
Substituting the obtained controls in \cref{eq:h_sys} we get
\begin{align}
\dot{q} &=  h_1 X_1(q) + h_2 X_2(q)\nonumber,\\
\dot{h}_1 &=  -h_2 h_3\nonumber,\\
\dot{h}_2 &=  h_1 h_3, \label{eq:h_sys_norm} \\
\dot{h}_3 &=  h_1 h_4 + h_2\left(T_6 h_1 + T_4 h_2 + T_2 
h_3\right),\nonumber \\
\dot{h}_4 &=  h_1 \left( C^1_{14} h_1 + T_5h_2 + T_3 h_3 + T_1 
h_4 
\right) \nonumber
\\
&+ h_2\left( X_1(T_6)h_1 + X_1(T_4)h_2 + (T_4  + X_1(T_2))h_3 + 
T_2 h_4 \right).\nonumber
\end{align}
which is a Hamiltonian system with Hamiltonian
\[
H_{\tilde{u}(t)} = H = \frac{h_1^2 + h_2^2}{2}.
\]

Assume that an abnormal geodesic $(q(t),h(t))$ satisfies 
\eqref{eq:h_sys_norm}. Since it is an integral curve of $X_2$ we must 
have $h_1\equiv 0$. Moreover, the Hamiltonian $H$ is a first integral of the 
system. Therefore $2H = h_1^2 + h_2^2 = const \neq 0$ and so $h_2 
= const \neq 0$. Thus from \eqref{eq:h_sys_norm} it follows that $h_3 
\equiv 0$. But the forth equation 
gives us $T_4 h_2^2 = 0$, which can hold if and only if $T_4 = 0$ along the curve. 
All  these conditions reduce the system to the equation 
\be\label{eq:red}
\dot h_4 = h_2 T_2 h_4,
\ee
which always has a solution.
 
On the other hand assume that along an abnormal extremal $T_4=0$. By substituting 
$h_1\equiv 0$, $h_2\equiv 1$, $h_3\equiv 0$ into \eqref{eq:h_sys_norm} we reduce the 
system to \eqref{eq:red}. This equation always has a sign-definite solution which 
guarantees that the abnormal extremal is normal as well.
\end{proof}

One can check from the classification in section~\ref{sec:2}, that among the type III left-invariant Engel structures, there are indeed those that have $T_4 \neq 0$. The following result says that the normal geodesic flow on all these algebras is integrable. 
 \begin{theorem}
\label{the:main}
\label{prop:integra}
Consider a left-invariant sub-Riemannian Engel structure of type III which is defined over a Lie group with a Lie algebra
\begin{align}
&[X_1,X_2]=X_3, & &[X_1,X_3]=X_4, 
\\
&[X_1,X_4] = T_3 X_3, & &[X_2,X_3] =T_6 X_1 + T_4 X_2, 
\\
&[X_2,X_4] = T_4 X_3, & &[X_3,X_4] = -T_6 T_3 X_1 -T_4 T_3 X_2 + T_4 X_4,
\end{align}
where vector fields $X_1,$ $X_2$ form an orthonormal sub-Riemannian frame. The normal Hamiltonian flow of this structure is super-integrable meaning that it has four independent commuting first integrals including the Hamiltonian $H$ and one more independent first integral that commutes 
with $H$. If $T_4 \neq 0$ then the abnormal geodesics of the structure are strict.
\end{theorem}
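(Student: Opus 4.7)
The strictness assertion is immediate from Theorem~\ref{prop:2}: under the assumption $T_4\neq 0$, every abnormal extremal is strictly abnormal. The rest of the argument is devoted to super-integrability of the normal Hamiltonian flow, and the plan is to produce five independent first integrals on $T^{*}G$ by combining Casimirs of the Lie--Poisson bracket on $\mathfrak{g}^{*}$ with right-invariant Hamiltonians, exploiting that right- and left-invariant functions Poisson-commute on $T^{*}G$.

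The first step is to locate the Casimirs. A direct inspection of the brackets in the theorem shows that $Z = T_4 X_1 - T_3 X_2 + X_4$ spans the centre of $\mathfrak{g}$: for example $[X_1,Z] = -T_3 X_3 + T_3 X_3 = 0$ and $[X_2,Z] = -T_4 X_3 + T_4 X_3 = 0$, and $[X_3,Z]$, $[X_4,Z]$ vanish by similar cancellations involving the products $T_3 T_6$ and $T_3 T_4$ that appear in $[X_3, X_4]$. This yields the linear Casimir
\[
C_0 = T_4 h_1 - T_3 h_2 + h_4.
\]
Evaluating the Poisson tensor $\pi_{ij}=\{h_i,h_j\}$ at a generic value of $h$ shows it has rank $2$, so a second independent Casimir is expected. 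Inserting a quadratic ansatz $C = a_{ij}h_ih_j$ into the system $\{h_k, C\} = 0$ reduces the problem to linear algebra and, modulo $C_0^2$, produces the new quadratic Casimir
\[
C_1 = T_6 h_1^2 + T_3 h_2^2 + h_3^2 - 2 h_2 h_4;
\]
a straightforward Leibniz-rule computation then verifies $\{h_k, C_1\} = 0$ for every $k$.

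Next I would assemble the four commuting integrals. Since $C_0, C_1$ are Casimirs they Poisson-commute with every function on $T^{*}G$, in particular with $H = (h_1^2+h_2^2)/2$ and with each other. For a fourth, let $Y_i$ denote the right-invariant vector field on $G$ agreeing with $X_i$ at the identity and set $\eta_i = \langle \lambda, Y_i\rangle$. Since left- and right-invariant vector fields commute, $\{\eta_i, h_j\} = 0$ for all $i, j$, so each $\eta_i$ Poisson-commutes with $H, C_0, C_1$. Taking $\eta_1$ as the fourth integral gives four functions in involution on the eight-dimensional $T^{*}G$; their functional independence follows because $H, C_0, C_1$ depend only on the fibre coordinates $h_k$ while $\eta_1$ depends nontrivially on the base through $\mathrm{Ad}^{*}_{g}$. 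For the extra super-integrability integral I would take $\eta_2$: it too commutes with $H, C_0, C_1$, but $\{\eta_1, \eta_2\} = -\eta_3 \not\equiv 0$, so $\eta_2$ is not in involution with $\eta_1$ and is independent of the preceding four.

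The technical heart of the argument is the discovery of the quadratic Casimir $C_1$. The linear $C_0$ is forced by the centre of $\mathfrak{g}$, but $\mathfrak{g}$ is far from semisimple and offers no Killing form to exploit, so $C_1$ has to be extracted by a direct analysis of the Casimir PDE system $\{h_k, C\} = 0$. The explicit dependence of $C_1$ on $T_3$ and $T_6$, together with the bilinear combination $-2h_2h_4$, reflects the nontrivial central-extension structure of the type III Lie algebras.
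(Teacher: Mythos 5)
Your proposal is correct and follows essentially the same route as the paper: your linear Casimir $C_0$ is exactly the paper's $h'_4$ attached to the centre $X'_4 = X_4 + T_4 X_1 - T_3 X_2$, your quadratic integral satisfies $C_1 = 2G - (T_3-T_6)H$ where $G$ is the paper's quadratic first integral (and a direct check confirms $\{h_k,C_1\}=0$ for all $k$, so it is indeed a Casimir, which is slightly stronger than what the paper records for $G$), and the remaining integrals are the right-invariant Hamiltonians in both arguments, with the strictness claim delegated to Theorem~\ref{prop:2} exactly as in the paper. The only substantive difference is the independence step: the paper writes out the $5\times 8$ matrix of differentials and exhibits a $5\times 5$ minor equal to $h_1 h_3^3$, whereas your argument (fibre-only dependence of $H, C_0, C_1$, plus $\{\eta_1,\eta_2\}=\pm\eta_3\not\equiv 0$ forcing $\eta_2$ out of the span of the commuting family) is sound in spirit but should still be supplemented by checking that $dH, dC_0, dC_1$ are generically independent on the fibre (this uses $h_3\neq 0$) and that $d\eta_1$ has a nonvanishing base component.
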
 

Before we prove the theorem, let us investigate the structure of corresponding Lie algebras. First, one can notice that any type III Lie algebra is a central extension of a 3-dimensional Lie algebra. The center element is
\[
X'_4 = X_4 + T_4 X_1 -T_3 X_2.
\]

The underlying Lie algebra is semi-simple if and only if $D=(T_4)^2+ T_3 T_6\neq 
0$. If $D<0$ and $T_3<0$ (equivalently $T_6<0$) then it is 
$\gso(3,\R)$ or $\gsl(2,\R)$ otherwise. 

Consider now the case $D=0$. If $T_4=T_6=0$ then we have a trivial extension either of the Lie algebra of Euclidean motions of the plane ($T_3>0$) or the Lie algebra 
of Poincare motions of the plane ($T_3<0$).
Otherwise $T_4\neq 0$ and we obtain a non-trivial 
extension of a solvable Lie algebra of dimension 3 with 
2-dimensional derived algebra.
The whole family already appeared in the classification of Almeida in~\cite{almeida2} and among examples of Engel structures in~\cite{gershkovich}.

\begin{proof}[Proof of Theorem~\ref{prop:integra}]
Instead of the basis for the type III family from \Cref{tbl_a}, we use basis $\{X_1,X_2,X_3,X'_4\}$ in the proof. Then the only non-zero structure equations are
\begin{align}
[X_1,X_2] &= X_3,\\
[X_1,X_3] &= X'_4 - T_4 X_1 + T_3 X_2,\\
[X_2,X_3] &=  T_6 X_1 + T_4 X_2.
\end{align}
The Hamiltonian function $h'_4 = \langle \lambda, X'_4\rangle$ which corresponds to the center element $X_4'$ is a first integral. In the basis $X_1,X_2,X_3,X'_4$ the Hamiltonian system takes the form
\begin{align}
\dot{h}_1 &= -h_2 h_3, \nonumber\\
\dot{h}_2 &= h_1 h_3, \label{eq:ham_ver}\\
\dot{h}_3 &= h_1h'_4 - T_4(h_1^2 - h_2^2) + (T_3 + T_6)h_1h_2,\nonumber
\end{align}
where $h'_4 = const$. It is easy to see that \eqref{eq:ham_ver} has the following first integral 
$$
G = \frac{h_3^2}{2} - h'_4 h_2 + \frac{T_3+T_6}{4}(h_1^2 - h_2^2) + T_4 
h_1h_2.
$$

Let $I: g \mapsto g^{-1}$ be the inverse map of the Lie algebra and $X^R(g) = I_*X^L(g)$ be the right invariant fields 
constructed from the left-invariant ones. 
Let $h^R_i$ be the right-invariant Hamiltonian functions. We know that $h^R_i$ commute with any left-invariant 
Hamiltonian function and thus they commute with $H,G,h'_4$. Therefore the whole family $H,G,h'_4,h^R_1$ is commutative and $\{H, h^R_2\} = 0$.

We claim that $dH,dG,dh'_4,dh^R_1,dh^R_2$ are linearly independent 
almost everywhere. Actually it is enough to check that only in one, point for example, at the identity. Indeed it is known that any finite-dimensional Lie group   is analytic, i.e. it admits an analytic structure as a manifold with analytic multiplication. Then the right and left-invariant Hamiltonians and their differentials are going to be analytic as well. Since the linear dependence is an algebraic condition on the components of the corresponding vectors, we get that if the differential above are linearly independent at some point, then they must be independent almost everywhere.

Assume that left-invariant Hamiltonian functions and right invariant 
Hamiltonian functions are related by
\[
 h^R_i =  a^j_i(g) h_j,
\]
where $X^R_i(g) = a^j_i(g) X^L_j(g)$ and $a^j_i(\id) = -\delta^j_i$. In~\cite{sachkov} it was shown that  in the coordinates of the first 
kind $
\frac{\p a^k_i}{d x^j}(\id) = c^k_{ji}.
$
Using this identity we deduce
\[
\begin{psmallmatrix}
dH\\
dG\\
dh'_4\\
dh^R_1\\
dh^R_2
\end{psmallmatrix} = 
\begin{psmallmatrix}
h_1 & h_2 & 0 & 0 & 0 & 0 & 0 & 0 \\
\frac{T_3+T_6}{2}h_1 +T_4h_2 & -h'_4 -\frac{T_3+T_6}{2}h_2 +T_4h_1 & h_3 
& -h_2 & 0 &0 &0 &0 \\
0 & 0 & 0 & 1 & 0 & 0 &0 &0 \\
-1 & 0 & 0 & 0 & 0 & -h_3 & -h'_4 + T_4 h_1 - T_3 h_2 & 0 \\
0 & -1 & 0 & 0 & h_3 & 0 & - T_6 h_1 - T_4 h_2 & 0
\end{psmallmatrix}.
\]
The determinant of the first, third, fourth, fifth and sixth rows is equal to 
$h_1 h_3^3$. Therefore $H,G,h'_4,h^R_1,h^R_2$ are 
almost everywhere functionally independent first integrals.
\end{proof}

\begin{rem}
It is straightforward to verify that the normal Hamiltonian flow of the left-invariant Engel structure admits Casimir functions only for examples of type I and type III. As follows from the structure equations, type I Lie algebras do not admit strictly abnormal geodesics, but from the integrability point of view they are simpler and could be worth considering. For example, type I algebras with structure constants 
\[ 
T_1 = n + m - 1, \quad
T_3 = n + m - nm, \quad
T_6 = - nm,
\]
admit polynomial first integrals of order $n+1$ and $m+1$, with any $m > n \geq 0$, which are given by
\begin{align*}
F_1 & =  \left( \frac{h_3 + h_4 
-(h_2+h_3)n}{(1+m)(m-n)} \right) \left( \frac{ h_4 + 
mnh_2-(m+n)h_3}{(1+m)(1+n)} \right)^m , \\
F_2 &  =  \left( \frac{m(h_2 + h_3) - h_3 - 
h_4}{(1+n)(m-n)} \right) \left( \frac{ h_4 + 
mnh_2-(m+n)h_3}{(1+m)(1+n)} \right)^n.
\end{align*}
\end{rem}

\section{Local minimality of abnormal geodesics}
\label{sec:3}

In differential geometry local minimality is usually understood in the sense that sufficiently short arcs of a curve are minimal. That means that for every point $t_0$ on an admissible curve $\gamma$ there exists a sufficiently small interval $[t_1,t_2]$ containing $t_0$ such that $\gamma|_{[t_1,t_2]}$ is the shortest curve among all admissible curves connecting $\gamma(t_1)$ and $\gamma(t_2)$. Such a curve is called geodesic.

However, in calculus of variations the word local  in  ``local minimality'' often refers to topology on a space of admissible curves. Consider a curve $\gamma$ defined on $[0,T]$. We are interested whether the whole curve $\gamma$ is shorter then any other sufficiently close admissible curve connecting $\gamma(0)$ and $\gamma(T)$. The answer to this question depends heavily on the topology we choose. Sobolev space topology $W^{1,\infty}$ was studied in~\cite{sussmann} for Engel manifolds and~\cite{agrachev_abnormal} for the general case. Some results on the $C^1$ topology can be found in~\cite{BryantHsu} and local optimality conditions for rank 2 distributions in the $C^0$-topology can be found in Chapter 12 of~\cite{ABB}. We follow the last reference.

\begin{definition}
An admissible curve $\gamma$ connecting $\gamma(0) = q_0$ with $\gamma(T) = q_T$ is called a \textit{$C^0$-local minimizer} if there exists a $C^0$-neighbourhood $U$ of $\gamma$, s.t. any other admissible curve $\hat{\gamma}$ {\color{blue} from $U$ with $\hat{\gamma}(0) = q_0$ and $\hat{\gamma}(T) = q_T$   is not longer then $\gamma$.}
\end{definition}

The Pontryagin maximum principle guarantees that short arcs of normal curves are length minimizers. The analysis of local minimality of abnormal curves is a subtle question in general. However, for Engel manifolds short pieces of abnormal curves are $C^0$-local minimizers. This was proven in~\cite{ABB} by first establishing that abnormal curves on an Engel manifold are $H^1$-local minimizers and then by showing that $H^1$-local minimality implies $C^0$-local minimality for continuously differentiable curves.

To determine whether or not the whole geodesic is a $C^0$-local minimizer we investigate the presence of conjugate points along it. We present here only definitions and the theory related to the Engel case. For the most general situation see~\cite{agrachev_maslov} and for some particular cases see~\cite{as,ABB}.

\begin{definition}[\cite{ABB}]
Let $\gamma(t) = e^{tX_2}(q_0)$ be a unit speed abnormal geodesic on an Engel manifold. The moment of time $t>0$ is called conjugate if 
$$
e^{tX_2}_* \cD_{q_0} =  \cD_{\gamma(t)}.
$$
\end{definition}

\begin{theorem}[\cite{ABB}]
If an abnormal geodesic of an Engel manifold does not contain conjugate points, then it is a $C^0$-local minimizer. Conversely, if a \textbf{strictly} abnormal geodesic is a $C^0$-local minimizer, then it does not contain conjugate points.
\end{theorem}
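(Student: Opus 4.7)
The plan is to treat the two directions with different tools: for the forward implication I would construct a calibration on a tubular neighbourhood of $\gamma$ built from the foliation of $M$ by abnormal geodesics, whereas the converse requires exhibiting a length-shortening variation of $\gamma$ past a conjugate point via a second-variation argument.

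For the forward direction, recall from \Cref{prop:2} that every abnormal geodesic on an Engel manifold is an integral curve of the characteristic field $X_2$, so a neighbourhood of the image of $\gamma$ is foliated by nearby abnormal extremals. I would lift this foliation to $T^*M$ as the abnormal extremal Lagrangian sitting inside the characteristic variety $\{h_1 = h_2 = 0\}$. The no-conjugate-point hypothesis $e^{tX_2}_*\cD_{q_0} \ne \cD_{\gamma(t)}$ for all $t\in(0,T]$ is precisely what prevents this Lagrangian from projecting to $M$ with a singularity along the piece of $\gamma$ under consideration; equivalently, the pushed-forward field $e^{tX_2}_*X_1$ stays transverse to $\cD_{\gamma(t)}$. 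With the Lagrangian projecting diffeomorphically, I can construct an exact 1-form $\omega = dS$ on a tube around $\gamma$ whose restriction to $\cD$ coincides with the metric dual of $X_2$; by Cauchy--Schwarz, $|\omega(v)| \le \sqrt{g(v,v)}$ for every $v\in\cD$, with equality along $X_2$ itself. Hence for any admissible curve $\hat\gamma$ with $\hat\gamma(0)=q_0$, $\hat\gamma(T)=q_T$ staying in this tube (automatic if it is $C^0$-close to $\gamma$),
\begin{equation*}
\ell(\gamma) = \int_\gamma \omega = \int_{\hat\gamma} \omega \le \int_0^T \sqrt{g(\dot{\hat\gamma}, \dot{\hat\gamma})}\, dt = \ell(\hat\gamma),
\end{equation*}
proving $C^0$-local minimality.

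For the converse, I would argue by contradiction. Assume $\gamma$ is strictly abnormal (equivalently $T_4 \neq 0$ along $\gamma$ by \Cref{prop:2}), $C^0$-locally minimizing on $[0,T]$, and admits a conjugate time $t_* \in (0,T)$. The conjugate relation produces a nontrivial admissible infinitesimal variation $v(t)$ of $\gamma|_{[0,t_*]}$ whose endpoints are tangent to $\cD$ to leading order; this is the Engel analogue of a vanishing Jacobi field and lies in the kernel of the abnormal second-variation form. Strict abnormality rules out a positive correction coming from a normal lift, so along this variation, extended by zero on $[t_*,T]$ and suitably smoothed, the intrinsic second variation of length is non-positive, and a transverse perturbation makes it strictly negative. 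Integrating this infinitesimal data to finite order yields an admissible family of curves with the same endpoints and length strictly less than $\ell(\gamma)$; these competitors are $C^0$-close to $\gamma$, contradicting the hypothesis.

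The principal obstacle is the gap between $W^{1,\infty}$- and $C^0$-local minimality. Standard second-variation and extremal-field arguments give minimality only among competitors that are close in $W^{1,\infty}$, whereas the $C^0$ topology allows admissible curves with wildly oscillating controls. The Engel-specific feature that bridges this gap is that abnormal geodesics are smooth integral curves of a single vector field, so any $C^0$-close admissible curve can, after reparametrization and an approximation argument (the technical heart of the analogous result in Chapter 12 of~\cite{ABB}), be brought $W^{1,\infty}$-close; once this reduction is in place, the calibration argument above applies verbatim, and for the converse the constructed length-shortening deformation lies in a $W^{1,\infty}$-neighbourhood, hence a $C^0$-neighbourhood, of $\gamma$ by construction, so the contradiction with $C^0$-local minimality goes through.
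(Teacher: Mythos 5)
You should first be aware that the paper does not prove this statement at all: it is quoted verbatim from \cite{ABB} (Chapter 12), so there is no internal proof to compare against, and I assess your argument on its own terms.

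The forward direction as you set it up cannot work, and the obstruction is precisely the strict abnormality that makes the theorem interesting. A calibration $\omega=dS$ on a tube around $\gamma$ with $|\omega(v)|\le\sqrt{g(v,v)}$ for $v\in\cD$ and $\omega(\dot\gamma)=|\dot\gamma|$ forces the function $(dS(X_1))^2+(dS(X_2))^2$ to attain its maximum $1$ along $\gamma$, hence all derivatives of it transverse to $\gamma$ vanish there; in particular $X_3\bigl(dS(X_2)\bigr)=0$ on $\gamma$. On the other hand, $d(dS)=0$ evaluated on $(X_1,X_2)$ gives $dS(X_3)=0$ along $\gamma$, and evaluated on $(X_2,X_3)$ together with the structure equation $[X_2,X_3]=T_6X_1+T_4X_2+T_2X_3$ it gives $X_3\bigl(dS(X_2)\bigr)=-T_4$ along $\gamma$. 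So the calibration you want exists only when $T_4=0$, i.e.\ exactly when $\gamma$ is \emph{not} strictly abnormal (\Cref{prop:2}). Relatedly, the ``abnormal extremal Lagrangian'' you propose to project lies in $\{h_1=h_2=0\}$, so any $1$-form obtained as a section of it annihilates $\cD$ and calibrates nothing. This is the well-known reason why strictly abnormal minimizers cannot be certified by Hamilton--Jacobi/calibration methods; the actual proof in \cite{ABB} runs through the second variation of the endpoint map along the singular curve, its reduction to the scalar Sturm--Liouville problem that reappears as \eqref{eq:jacobi_trans} in \Cref{thm:3}, and the Agrachev--Sarychev index theory converting positivity of that reduced form into $H^1$- and then $C^0$-local optimality.

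Your converse is pointed in the right direction --- past a conjugate point the reduced second variation becomes indefinite, and strict abnormality is what prevents a normal lift from rescuing optimality --- but the step ``integrating this infinitesimal data \ldots{} yields an admissible family of curves with length strictly less'' is the hard part and cannot be waved through: along an abnormal extremal the endpoint map is singular, so a negative direction of the second variation does not produce admissible competitors with the prescribed endpoints by naive exponentiation; one needs the Morse-index machinery for singular curves of \cite[Ch.~12]{ABB}. As written, both halves contain essential gaps, and the first half rests on a construction that provably does not exist in the strictly abnormal case.
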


It is important to note that in general a presence of a conjugate point does not imply that the abnormal geodesic is not a $C^0$-minimizer. The minimizing property depends on the number of lifts this geodesic has. If it has a unique lift to the cotangent bundle, then indeed a presence of at least one conjugate point is sufficient for non-optimality. For the general case see~\cite[Theorem~20.3]{as}. 

The next theorem establishes necessary conditions for $C^0$-local minimality of abnormal geodesics.
\begin{theorem}\label{thm:3}
Let $\gamma(t) = e^{tX_2}(q_0)$ be an unit-speed abnormal geodesic on an Engel 
manifold and let
$$
\Delta_\gamma(t) = T_6(\gamma(t)) + \frac12 \dot T_2(\gamma(t)) - \frac14 T_2(\gamma(t))^2.
$$
If $\Delta_\gamma \leq 0$ on $[0,T]$, then $\gamma|_{[0,T]}$ is
$C^0$-local minimizing. If $\gamma(t)$ is strictly abnormal and $\Delta  \geq C > 0$, then $\gamma|_{[0,\tau]}$ is not a $C^0$-local minimizer for $\tau \geq \pi/\sqrt{C}$. 
\end{theorem}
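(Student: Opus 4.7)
The plan is to characterize the conjugate times of $\gamma$ in terms of a scalar Jacobi-type ODE and then invoke the preceding conjugate-point theorem together with the classical Sturm comparison. Since $X_2$ is invariant under its own flow, $e^{tX_2}_* X_2|_{q_0} = X_2|_{\gamma(t)}$, and the conjugate condition $e^{tX_2}_* \cD_{q_0} = \cD_{\gamma(t)}$ reduces to the single condition $V(t) := e^{tX_2}_* X_1|_{q_0} \in \cD_{\gamma(t)}$. I would expand $V(t) = \sum_i v^i(t)\, X_i|_{\gamma(t)}$ in the canonical frame.

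The pushforward property is equivalent to the Lie-transport condition $[X_2, \tilde V]|_\gamma = 0$ for any smooth extension $\tilde V$ of $V$. Using $X_2(v^i)|_\gamma = \dot v^i$ and reading off $[X_2, X_i]$ from \eqref{eq:struct} gives a linear system of four ODEs for $v^i(t)$, with initial data $v^1(0) = 1$ and $v^j(0) = 0$ for $j \neq 1$. The equation for $v^4$ decouples as $\dot v^4 = -T_2 v^4$, so $v^4 \equiv 0$. Hence the conjugate condition reduces to $v^3(t) = 0$, and eliminating $v^1$ from the remaining pair produces the scalar Jacobi equation
\[
\ddot v^3 + T_2 \dot v^3 + (\dot T_2 + T_6)\, v^3 = 0.
\]
The substitution $v^3(t) = \exp\bigl(-\tfrac12 \int_0^t T_2(\gamma(s))\,ds\bigr)\, u(t)$ removes the first-derivative term and yields exactly
\[
\ddot u + \Delta_\gamma(t)\, u = 0, \qquad u(0) = 0, \quad \dot u(0) = 1,
\]
so that conjugate times of $\gamma$ are precisely the positive zeros of $u$.

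Both conclusions are now Sturm-comparison statements for this scalar equation combined with the preceding theorem. If $\Delta_\gamma \le 0$ on $[0,T]$, then $\ddot u \ge 0$ whenever $u \ge 0$, so $\dot u \ge 1$ while $u \ge 0$; hence $u > 0$ on $(0,T]$, $\gamma|_{[0,T]}$ has no conjugate points, and is therefore a $C^0$-local minimizer. If $\Delta_\gamma \ge C > 0$, Sturm comparison with $\ddot w + Cw = 0$, $w(0) = 0$, $\dot w(0) = 1$ (whose first positive zero is at $\pi/\sqrt{C}$) produces a zero of $u$ in $(0, \pi/\sqrt{C}]$. For strictly abnormal $\gamma$, the converse half of the preceding theorem then gives non-$C^0$-minimality on $[0,\tau]$ for every $\tau \ge \pi/\sqrt{C}$.

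The main computational step is the derivation of the scalar Jacobi ODE in the canonical frame; the decoupling $v^4 \equiv 0$ is essential and is what allows the clean reduction to a second-order equation whose prepotential is exactly $\Delta_\gamma$. A conceptual point worth noting is that $[X_2, \tilde V]|_\gamma$ is independent of the extension $\tilde V$, which is the standard basis for the Lie-transport characterization of the pushforward used above.
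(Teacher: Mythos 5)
Your proposal is correct and follows essentially the same route as the paper: both reduce the conjugate-point condition to the transport of $X_1$ along the flow of $X_2$, obtain the linear system from the structure equations \eqref{eq:struct}, use the decoupled equation $\dot v^4=-T_2v^4$ with $v^4(0)=0$ to get $v^4\equiv 0$, pass to the scalar equation $\ddot v^3+T_2\dot v^3+(T_6+\dot T_2)v^3=0$, remove the first-order term by the exponential substitution to arrive at $\ddot u+\Delta_\gamma u=0$, and conclude by Sturm comparison together with the preceding conjugate-point theorem. No gaps.
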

\begin{proof}
Let us write down and analyse the corresponding Jacobi equation. 
Obviously $e^{tX_2}_* (X_2(\gamma(0)))= X_2(\gamma(t))\in \cD_{\gamma(t)}$. So we must 
consider the evolution of $A(t) = e^{tX_2}_* X_1$ along the abnormal 
curve $\gamma(t)$. A time $t_*>0$ is conjugate if and only if 
$A(t_*)(\gamma(t_*)) \in \cD_{\gamma(t_*)}$. Using the definition of  Lie derivative we see that
\begin{equation}\label{eq:jac}
\dot{A}(t) = [A(t),X_2].
\end{equation}

Let ${A}(t)=A_1(t)X_1+A_2(t)X_2+A_3(t)X_3+A_4(t)X_4.$ Using the structure constants \eqref{eq:struct} of the canonical frame and projecting equation \eqref{eq:jac} on $\{X_1,X_2,X_3,X_4\}$ we obtain
\be\label{eq:jacobi}
\begin{aligned}
\dot{A}_1 &= -T_6 A_3 - X_1(T_6) A_4,\\
\dot{A}_2 &= -T_4 A_3 - X_1(T_4) A_4,\\
\dot{A}_3 &= A_1 - T_2 A_3 - (T_4  + X_1(T_2)) A_4,\\
\dot{A}_4 &= -T_2 A_4,
\end{aligned}
\ee
where all $T_i$ as well as $X_1(T_i)$ are evaluated along the curve $\gamma$. The system \eqref{eq:jacobi} is linear with boundary conditions $A(0) = 
(1,0,0,0)$ and $A_3(t_*) = A_4 (t_*) = 0$ where $t_*$ is the supposed
conjugate time. Note that the first, the third and the fourth equations form a closed subsystem. Moreover from the last 
equation and the boundary conditions we obtain $A_4 \equiv 0$. This way we are left to study the non trivial solutions to the boundary value problem
\be\label{eq:bound}
\begin{aligned}
\dot{A}_1 &= -T_6 A_3,
\\
\dot{A}_3 &= A_1 - T_2 A_3,
\\
A_1(0)= 1,\;& A_3(0) =0, \; A_3(t_*) = 0.
\end{aligned}
\ee
Using the fact that the abnormal curve is smooth since it is an integral curve of a smooth vector field, we rewrite 
\eqref{eq:bound} as a single second order ODE:
\begin{equation*}
\begin{aligned}
&\ddot{A}_3 + T_2 \dot{A}_3 + (T_6 + \dot{T}_2)A_3 = 0, \\
A_3&(0)= 0 ,\; A_3(t_*) \; = 0, \; \dot{A}_3(0) = 1.
\end{aligned}
\end{equation*}

This allows us to use the results from the oscillation theory of 
second order ODEs. After the change of variables 
$$
y = A_3 \exp\left( \int_0^t  \frac{T_2(\tau)}{2} d\tau \right),
$$
we get an equivalent formulation of the boundary value problem
\begin{align}
&\ddot{y} +\left( T_6 + \frac{\dot{T}_2}{2} - \frac{T_2^2}{4} \right)y = 
0, \label{eq:jacobi_trans}\\
y&(0) = 0 ,\; y(t_*) \; = 0, \; \dot{y}(0) = 1.\label{e:bvp} 
\end{align}
Now the statement of the theorem is a direct consequence of the 
Sturm comparison theorem.
\end{proof}

In the case of left-invariant structures, i.e. when all 
$T_i$'s are constants, we get a sharp result.

\begin{corollary}
Let $\gamma(t)$ be an abnormal curve of a left-invariant Engel structure and let $\Delta = T_6  - \frac14 (T_2)^2$. If 
$\Delta   > 0$, then all the conjugate times are given by
$$
t_{conj} = \frac{\pi k}{\sqrt{\Delta}}, \qquad \forall k \in \Z_+
$$ 
and if, moreover, $\gamma(t)$ is strictly abnormal then the restriction $\gamma|_{[0,\tau]}$ is a $C^0$-local minimizer if and only if $\tau < \pi/\sqrt{\Delta}$. If $\Delta \leq 0$,  then the restriction $\gamma|_{[0,\tau]}$ is a $C^0$-local minimizer for any $ \tau \in (0,+\infty)$. 
\end{corollary}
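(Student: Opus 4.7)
The plan is to specialize \Cref{thm:3} to the left-invariant setting, where all $T_i$ are constants, and then solve the resulting constant-coefficient Jacobi equation explicitly.

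First, since all structure functions are constant, $\dot T_2\equiv 0$ and the potential appearing in the transformed Jacobi equation of \Cref{thm:3} collapses to the constant $\Delta = T_6 - \tfrac14 T_2^2$. The boundary value problem \eqref{eq:jacobi_trans}--\eqref{e:bvp} therefore becomes
\[ \ddot y + \Delta\, y = 0, \qquad y(0)=0,\ \dot y(0)=1. \]
Moreover the substitution used in the proof of \Cref{thm:3} multiplies $A_3$ by the strictly positive factor $\exp(T_2 t/2)$, so the positive zeros of $A_3$ coincide exactly with those of $y$. Hence conjugate times along $\gamma$ are precisely the positive zeros of this elementary linear ODE.

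Next, I solve the ODE case by case. For $\Delta>0$ the unique solution is $y(t) = \sin(\sqrt{\Delta}\,t)/\sqrt{\Delta}$, whose positive zeros form exactly the arithmetic progression $t = \pi k/\sqrt{\Delta}$ with $k\in\Z_+$, which is the claimed formula for $t_{conj}$. For $\Delta=0$ one has $y(t)=t$, and for $\Delta<0$, $y(t)=\sinh(\sqrt{-\Delta}\,t)/\sqrt{-\Delta}$; in both of these cases $y$ has no positive zero, so there are no conjugate times along $\gamma$.

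Finally, I feed these conjugate-point results into the two-sided $C^0$-optimality criterion stated just before \Cref{thm:3}. When $\Delta\le 0$, $\gamma$ is conjugate-point free, so every restriction $\gamma|_{[0,\tau]}$ is a $C^0$-local minimizer by the sufficient direction. When $\Delta>0$ and $\gamma$ is strictly abnormal, $\gamma|_{[0,\tau]}$ is conjugate-point free iff $\tau<\pi/\sqrt{\Delta}$; the sufficient direction then yields minimality for such $\tau$, while for $\tau\ge\pi/\sqrt{\Delta}$ the presence of the conjugate point $\pi/\sqrt{\Delta}$ combined with strict abnormality rules out minimality via the converse direction. There is essentially no obstacle here: once \Cref{thm:3} is invoked, the whole corollary reduces to a constant-coefficient linear ODE calculation followed by a bookkeeping application of the previously established $C^0$-optimality theorem.
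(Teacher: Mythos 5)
Your proposal is correct and follows exactly the route the paper intends: the paper's own proof is just the one-line observation that in the left-invariant case $\Delta$ is constant so the boundary value problem \eqref{eq:jacobi_trans}--\eqref{e:bvp} can be solved explicitly, and your write-up simply fills in those elementary details (the explicit solutions in the three sign cases, the positivity of the exponential factor relating $y$ to $A_3$, and the application of the two-sided $C^0$-optimality criterion). No gaps.
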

\begin{proof}
In the left-invariant case $\Delta$ is a constant. Therefore we can solve the boundary value problem \eqref{eq:jacobi_trans}-\eqref{e:bvp} explicitly.
\end{proof}

\bibliographystyle{siam}
\bibliography{references}

\begin{thebibliography}{10}

\bibitem{agrachev_open}
{\sc A.~Agrachev}, {\em Geometric Control Theory and Sub-Riemannian Geometry},
  Springer, 2013, ch.~Some open problems, pp.~1--13.

\bibitem{ABB}
{\sc A.~Agrachev, D.~Barilari, and U.~Boscain}, {\em Introduction to riemannian
  and sub-riemannian geometry}.
\newblock
  \href{https://webusers.imj-prg.fr/~davide.barilari/Notes.php}{https://webuse%
rs.imj-prg.fr/~davide.barilari/Notes.php}, Ver. Nov 17, 2017.

\bibitem{agrachev_maslov}
{\sc A.~Agrachev and R.~Gamkrelidze}, {\em The morse index and the maslov index
  for smooth control systems}, Dokl. Acad. Nauk SSSR, 287 (1986), pp.~521--524.

\bibitem{as}
{\sc A.~Agrachev and Y.~Sachkov}, {\em Control theory from a geometric point of
  view}, Springer, 2004.

\bibitem{agrachev_abnormal}
{\sc A.~Agrachev and S.~Sarychev}, {\em Strong minimality of abnormal geodesics
  for 2-distributions}, J. Dynamical and control systems,  (1995),
  pp.~139--176.

\bibitem{almeida1}
{\sc D.~Almeida}, {\em Sub-riemannian symmetric spaces of engel type}, Mat.
  contemp., 17 (1998), pp.~45--58.

\bibitem{almeida2}
\leavevmode\vrule height 2pt depth -1.6pt width 23pt, {\em Sub-riemannian
  homogeneous spaces of engel type}, J. Dynamic. and Control syst., 20 (2014),
  pp.~149--166.

\bibitem{andryuha}
{\sc A.~Ardentov and Y.~Sachkov}, {\em Cut time in sub-riemannian problem on
  engel group}, J. ESAIM: Control, Optimisation and Calculus of Variations, 21
  (2015), pp.~958--988.

\bibitem{comparison}
{\sc D.~Barilari and L.~Rizzi}, {\em Comparison theorems for conjugate points
  in sub-riema- nnian geometry}, ESAIM Control Optim. Calc. Var., 22 (2016),
  pp.~439--472.

\bibitem{bonnard}
{\sc B.~Bonnard and E.~Trelat}, {\em On the role of abnormal minimziers in
  sub-riemannian geometry}, Ann. Fac. Sci. Toulose, 6 (2001), pp.~405--491.

\bibitem{BryantHsu}
{\sc R.~Bryant and L.~Hsu}, {\em Rigidity of integral curves of rank 2
  distribution}, Invent. Math.,  (1993), pp.~435--461.

\bibitem{cartan}
{\sc E.~Cartan}, {\em Sur quelques quadratures dont l'element differentiel
  contient des fonctions arbitraires}, Bull. Soc. Math. France,  (1901),
  pp.~118--130.

\bibitem{engel}
{\sc F.~Engel}, {\em Zur Invariantentheorie der Système von Pfaff'schen
  Gleichungen}, Leipz. Sitzungsber, 1889.

\bibitem{gershkovich}
{\sc V.~Gershkovich}, {\em Dynamical systems and applications}, World
  scientific publishing company, 1995, ch.~On simplest Engel structures on
  4-manifolds, pp.~279--294.

\bibitem{karidi}
{\sc C.~Gol\`e and R.~Karidi}, {\em A note on carnot geodesics in nilpotent lie
  groups}, Journal of Dynamical and Control Systems, 4 (1995), pp.~535--549.

\bibitem{hakavuori_nonminimal}
{\sc E.~Hakavuori and E.~Le~Donne}, {\em Non-minimality of corners in
  subriemannian geometry}, Inventiones mathematicae,  (2016).

\bibitem{kaz1}
{\sc M.~Kazarian, R.~Montgomery, and B.~Shapiro}, {\em Characteristic classes
  for the degenerations of two-plane fields in four dimensions}, Pacific
  J.Math., 179 (1997), pp.~35--370.

\bibitem{kaz2}
{\sc M.~Kazarian and B.~Shapiro}, {\em Giambelli formula for subbundles in the
  tangent bundle}, Pacific J.Math., 230 (2007), pp.~233--255.

\bibitem{sachkov}
{\sc A.~Mashtakov and Y.~Sachkov}, {\em Superintegrability of left-invariant
  sub-riemannian structures on unimodular three-dimensional lie groups},
  Differential equations, 51 (2015), pp.~1476--1483.

\bibitem{montgomery_singular}
{\sc R.~Montgomery}, {\em Abnormal minimizers}, SIAM J. Control Optim., 32
  (1994), pp.~1605--1620.

\bibitem{montgomery_integrable}
{\sc R.~Montgomery, M.~Shapiro, and A.~Stolin}, {\em A nonintegrable
  sub-riemannian geodesic flow on a carnot group}, Journal of Dynamical and
  Control Systems, 4 (1997), pp.~519--530.

\bibitem{free_carnot}
{\sc L.~Rizzi and S.~Ulysse}, {\em On the cut locus of free, step two carnot
  groups}, Proc. Amer. Math. Soc., 145 (2017), pp.~5341--5357.

\bibitem{sternberg1983}
{\sc S.~Sternberg}, {\em Lectures on differential geometry}, Chelsea Pub. Co.,
  1983.

\bibitem{sussmann}
{\sc H.~Sussmann}, {\em Sub-Riemannian geometry}, Birkhauser, 1996, ch.~A
  cornucopia of four-dimensional abnormal sub-Riemannian minimizers,
  pp.~341--364.

\bibitem{SussmannLiu}
{\sc H.~Sussmann and W.~Liu}, {\em Shortest paths for sub-riemannian metrics on
  rank-two distributions}, 1995.

\bibitem{zelenko1}
{\sc I.~Zelenko and C.~Li}, {\em Differential geometry of curves in lagrange
  grassmannians with given young diagram}, Differential Gemetry and its
  Applications,  (2009), pp.~723--742.

\bibitem{zelenko2}
\leavevmode\vrule height 2pt depth -1.6pt width 23pt, {\em Jacobi equations and
  comparison theorems for corank 1 sub- riemannian structures with symmetries},
  Journal of Geometry and Physics,  (2011), pp.~781--807.

\end{thebibliography}
\end{document}